\documentclass{amsart}

\usepackage{amsmath}
\usepackage{amsfonts}
\usepackage{amssymb}

\setlength{\textheight}{43pc} \setlength{\textwidth}{28pc}




\date{}

\newcommand{\beqa}{\begin{eqnarray*}}
\newcommand{\eeqa}{\end{eqnarray*}}
\newcommand{\beqn}{\begin{eqnarray}}
\newcommand{\eeqn}{\end{eqnarray}}

\newcommand{\iy}{\infty}

\newcommand{\R}{\mathbb R}

\newcommand{\N}{\mathbb N}

\newcommand{\ov}{\overline}

\newcommand{\es}{\emptyset}

\newcommand{\f}{\frac}

\newcommand{\al}{\alpha}
\newcommand{\be}{\beta}

\newcommand{\de}{\delta}

\newcommand{\Om}{\Omega}

\newcommand{\Si}{\Sigma}

\newcounter{cnt1}
\newcounter{cnt2}
\newcounter{cnt3}
\newcommand{\blr}{\begin{list}{$($\roman{cnt1}$)$}
 {\usecounter{cnt1} \setlength{\topsep}{0pt}
 \setlength{\itemsep}{0pt}}}
\newcommand{\bla}{\begin{list}{$($\alph{cnt2}$)$}
 {\usecounter{cnt2} \setlength{\topsep}{0pt}
 \setlength{\itemsep}{0pt}}}
\newcommand{\bln}{\begin{list}{$($\arabic{cnt3}$)$}
 {\usecounter{cnt3} \setlength{\topsep}{0pt}
 \setlength{\itemsep}{0pt}}}
\newcommand{\el}{\end{list}}

\newtheorem{thm}{Theorem}[section]
\newtheorem{lem}[thm]{Lemma}
\newtheorem{cor}[thm]{Corollary}

\newtheorem{Def}[thm]{Definition}
\newtheorem{prop}[thm]{Proposition}
\newtheorem{rem}[thm]{Remark}
\newcommand{\Rem}{\begin{rem} \rm}
\newcommand{\bdfn}{\begin{Def} \rm}
\newcommand{\edfn}{\end{Def}}

\newcommand{\ba}{\begin{array}}
\newcommand{\ea}{\end{array}}

\usepackage{tikz}

\tikzstyle{vertex}=[scale=0.9,auto=left,circle,fill=black!10,inner
sep=0.9pt]
\usetikzlibrary {positioning}
\usetikzlibrary{arrows}
\usepackage{graphics}
\usepackage{graphicx}
\usepackage{hyperref}
\usepackage{graphicx,epsfig}
\usepackage{tikz}
\usepackage{caption}
\usepackage{subcaption}
\usepackage{float}
\usetikzlibrary{shapes,arrows}

\begin{document}
\sloppy

\title{Ordinal indices of Small Subspaces of $L_p$}

\author[S Dutta]{S Dutta}
\address[S Dutta]{Department of Mathematics and Statistics\\
Indian Institute of Technology Kanpur \\
India, \textit{E-mail~:} \textit{sudipta@iitk.ac.in}}

\author[D Khurana]{D Khurana}
\address[Divya Khurana]{Department of Mathematics and Statistics\\
Indian Institute of Technology Kanpur \\
India, \textit{E-mail~:} \textit{divyakh@iitk.ac.in}}

\subjclass[2000]{46E30; 46B20}

\keywords{ordinal $L_p$-index, Rosenthal's space, $(p,2,(1))$ sum.\\
Version: Dec 3, 2014}

\begin{abstract}
We calculate ordinal $L_p$ index defined in \cite{BRS} for
Rosenthal's space $X_p$, $\ell_p$ and $\ell_2$. We show an
infinite dimensional subspace of $L_p$ $(2 < p < \infty)$
non isomorphic to $\ell_2$ embeds in $\ell_p$ if and only
if its ordinal index is minimum possible. We
also give a sufficient condition for a $\mathcal{L}_p$ subspace of
$\ell_p\oplus\ell_2$ to be isomorphic to $X_p$.
\end{abstract}

\maketitle

\section{Introduction}
Kadec and  Pelczynski in \cite{KP} proved that if $X$ is infinite
dimensional subspace of $L_p$ $(2 < p < \infty)$ then either $X$ is
isomorphic to $\ell_2$ or $X$ contains a isomorphic copy of
$\ell_p$. In addition if $X$ is complemented in $L_p \ (1<p<\iy)$
and $X$ is not isomorphic to $\ell_2$ then it contains a
complemented copy of $\ell_p$. They also proved that if $X$ is a
subspace of $L_p$ $(2 < p < \infty)$ such that $X$ is isomorphic to
$\ell_2$ then $X$ is complemented in $L_p$. In \cite{WO} it was
shown that if $X$ is a subspace of $L_p$ $(2 < p < \infty)$ such
that $\ell_2 \not\hookrightarrow X$ then $X\hookrightarrow\ell_p$.
Thus if $X$ is a subspace of $L_p$ $(\ 2 < p < \infty)$ such that
$X$ is not isomorphic to $\ell_2$ and $X\not\hookrightarrow \ell_p$
then $X$ contains an isomorph of $\ell_p\oplus\ell_2$. Moreover if
$X$ is $\mathcal{L}_p$ subspace not isomorphic to  $\ell_p$ then $X$
contains a complemented isomorph of $\ell_p\oplus\ell_2$. $\ell_p$,
$\ell_2$, $\ell_p\oplus\ell_2$ and $\ell_p(\ell_2)$  are referred as
small subspaces of $L_p$ $(2 < p < \infty)$ and for a long time
these were only known examples of complemented subspaces of $L_p$.
In 1970, Rosenthal \cite{Ro} constructed a complemented subspace of
$L_p$ $(1<p<\iy)$, which is denoted by $X_p$ and is not isomorphic
to the four spaces mentioned above. $X_p$ is also a kind of  small
subspace of $L_p$ in the sense that it embeds in $\ell_p \oplus
\ell_2$. In \cite{BRS} the authors defined an ordinal $L_p$ index
for separable Banach spaces. With help of this index they proved
that there are uncountably many mutually non isomorphic
$\mathcal{L}_p$ subspaces of $L_p$ $(1<p<\iy)$. But the exact value
of the $L_p$ index of the spaces constructed by them is not known so
far.

It was proved in \cite{WO1} that for $2<p<\iy$, if $X$ is a
$\mathcal{L}_p$ subspace of $\ell_p\oplus\ell_2$ with unconditional
basis then $X$ is isomorphic to one of the spaces $\ell_p$,
$\ell_p\oplus\ell_2$ or $X_p$. But for any general $\mathcal{L}_p$
subspace $X$ of $\ell_p\oplus\ell_2$ it is an open question whether
$X$ is isomorphic to one of the spaces $\ell_p$,
$\ell_p\oplus\ell_2$ or $X_p$ (see \cite{HOS}).

In this work we will first calculate the ordinal $L_p$ index defined in
\cite{BRS} for Rosenthal's space $X_p$, $\ell_p\oplus \ell_2$,
$\ell_p$ and $\ell_2$. We will show (Corollary~\ref{minimum}) that an
infinite dimensional subspace of $L_p$ $(2<p<\iy)$ non isomorphic to
$\ell_2$ embeds in $\ell_p$  if and only if its ordinal index
is minimum possible.

It follows that (see Theorem ~\ref{choice}) for any infinite
dimensional subspace of $L_p$ $(2<p<\iy)$, $L_p$ index can have the
following three possibilities $\omega_0$, $\omega_0\cdot 2$ or
greater than equal to $\omega_0^2$.

Coming back to the question mentioned above, in Theorem~\ref{Xp} we
will provide a sufficient condition (which is trivially necessary)
for a $\mathcal{L}_p$ $(2<p<\iy)$ subspace of $\ell_p\oplus\ell_2$
to be isomorphic to $X_p$.

We now provide basic background for our work. For notation we
closely follow \cite{DA} and \cite{BRS}.

\textbf{Notation}: Let $X$ and $Y$ be real Banach spaces. By
$X\stackrel{(c)}\hookrightarrow Y$, we mean $X$ is isomorphic to a
complemented subspace of $Y$. $X\equiv Y$ means $X$ is isometric to
$Y$. By $X\stackrel{k}\sim Y$, we mean $X$ is isomorphic to $Y$ and
there is an isomorphism $S:X\rightarrow Y$ such that
$||S||||S^{-1}||\leq k$. We will write $X\stackrel{k}\hookrightarrow
Y$ if  $X\stackrel{k}\sim Z$ for some isometric subspace $Z$ of
$Y$.\\
If $X$ is a subspace of $L_p$ by $X_0$ we denote the subspace of $X$
consisting of mean zero functions only.\\
For $2<p<\iy$, we will denote by $R_p$ the constant of equivalence in
Rosenthal's inequality \cite[Theorem 3]{Ro}. If in the context $p$
is fixed, we will simply denote it by $R$.

We now recall the notion of independent sum  and  $R_p^{\al}$ spaces.

Consider the sequence $\{X_n\}$ of Banach spaces where $X_n$
is subspace of $L_p(\Omega_n, \mu_n)$ for some probability measure $\mu_n$.
Let  $\mu=\Pi \mu_n$ be the product measure on $\Omega= \Pi \Omega_n$.
For each $n\in\N$, we denote the canonical projection  from
$\Omega$ to $\Omega_n$ by $P_n$ and $j_n(f)=foP_n$. Let $X$ be the subspace of
$L_p(\Omega, \mu)$ consisting of constant functions and $j$ be the
inclusion of $X$ into $L_p(\Omega, \mu)$. By $(\Si X_n)_{Ind,p}$
we denote the closed linear span of
$\cup j_n(X_n) \cup j(X)$ in $L_p(\Omega, \mu)$.\\

In defining $R_p^{\al}$ spaces we follow the view point considered in
\cite{DA}.\\
For $1\leq p< \iy$, let
$R^{0}_p=L_p^0=[1]_{L_p}$. Now suppose that $R^{n}_p$ has been defined for
$n\in \N$. We define $R^{n+1}_p=\ell_p^2\otimes R^{n}_p$ and
$R^{\omega_0}_p=(\Si R^{n}_p)_{Ind,p}$. Thus we have
$R^{\omega_0}_p=(\Si \ell_p^{2^n})_{Ind,p}$.  In general for any
ordinal $\al<\omega_1$, we define $R_p^{\al+1}=\ell_p^2\otimes R_p^{\al}$ and
for a limit ordinal $\be$ we put $R_p^{\be}=(\Si_{\al<\be} R_p^{\al})_{Ind,p}$.\\
It is known that Rosenthal's space $X_p$ is isomorphic to
$R^{\omega_0}_p$ for  $1 < p < \infty$.

Now we will define the notion of $(p,2,(w_n))$ sum of subspaces of
$L_p$ given in \cite{DA}.

\bdfn Let $(X_n)$ be a sequence of subspaces of $L_p(\Om,\mu)$
for some probability measure $\mu$ and
$\{w_n\}$ be a sequence of real numbers, $0\leq w_n \leq 1$. For any
sequence $(x_n)$ such that $x_n\in X_n$, let
 \begin{align*}
 ||(x_n)||_{p,2,(w_n)}=max\{(\sum ||x_n||_p^p)^{1/p}, (\sum
 ||x_n||_2^2w_n^2
 )^{1/2}\}
 \end{align*}
 and
 \begin{align*}
 X=(\Si X_n)_{p,2,(w_n)}=\{(x_n):x_n\in X_n~ for ~all ~n ~and
 ~||(x_n)||_{p,2,(w_n)}< \iy\}.
\end{align*}
 \edfn

\begin{rem}\label{sum}
If $w_n=1$ for all $n\in\N$ then we will denote $(\Si
X_n)_{p,2,(w_n)}$ by $(\Si X_n)_{p,2,(1)}$. From above definition we
can easily verify that if $X$ is a subspace of $L_p$ then $(\sum
X)_{p,2,(1)}$ is stable under taking $(p,2,(1))$ sum. We will denote
$(\sum X)_{p,2,(1)}$ by $(X)_{p,2,(1)}$. Thus $X \sim (X)_{p,2,(1)}$
if and only if $X$ is stable under taking $(p,2,(1))$ sum.
\end{rem}

\begin{rem}\label{stable}
In \cite [Lemma 2.1]{DA} it was proved that if $X_n$ is
a subspace of $L_p(\Omega_n,\mu_n)$ for each $n\in\N$
 then $(\Si X_n)_{Ind,p}$ is isomorphic to
$(\Si X_{n,0})_{Ind,p}\oplus L_p^0$. Thus it follows from
Rosenthal's inequality and Remark ~\ref{sum} that for each limit ordinal
$\omega_0\leq \al <\omega_1$, $R_p^{\al}$ $(2<p<\iy)$ is stable under taking
$(p,2,(1))$ sum. It is known that
(see \cite [Corollary 2.10]{DA}) $R_p^{\al}$ $(2<p<\iy)$ spaces are
isomorphically distinct at limit ordinals. Thus for each ordinal
$\omega_0\leq \al <\omega_1$, $R_p^{\al}$ $(2<p<\iy)$ is stable under taking
$(p,2,(1))$ sum.
\end{rem}

We will use the following result, proof of which is
essentially contained in the proof of
\cite[Theorem 2.4]{DA}.

\begin{thm}\label{alspach}
Let $2<p<\iy$. There exists a constant $A$ such that
$R_p^{\omega_0\cdot2}\stackrel{A} \sim (\sum
R_{p,0}^{\omega_0+n})_{p,2,(1)}$.
\end{thm}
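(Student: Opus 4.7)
The plan is to follow the argument of \cite[Theorem 2.4]{DA}. Since $\omega_0\cdot 2$ is a limit ordinal, $R_p^{\omega_0\cdot 2}=\bigl(\sum_{\alpha<\omega_0\cdot 2} R_p^\alpha\bigr)_{Ind,p}$. First I would apply \cite[Lemma 2.1]{DA} to peel off the constants, obtaining $R_p^{\omega_0\cdot 2}\sim L_p^0\oplus\bigl(\sum_{\alpha<\omega_0\cdot 2} R_{p,0}^\alpha\bigr)_{Ind,p}$. Since the $R_{p,0}^\alpha$ sit independently in $L_p$ with mean zero, Rosenthal's inequality replaces the independent sum with the corresponding $(p,2,(1))$-sum, with constant controlled by $R_p$.

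Next I partition the ordinals $\alpha<\omega_0\cdot 2$ into the finite ordinals $m\in\N$ and those of the form $\omega_0+n$, $n\in\N$; splitting the $(p,2,(1))$-sum accordingly yields a direct sum of two pieces with an absolute constant. Re-applying the same unwinding at the ordinal $\omega_0$ itself identifies $L_p^0\oplus\bigl(\sum_{m\in\N}R_{p,0}^m\bigr)_{p,2,(1)}\sim R_p^{\omega_0}$, so writing $Y=\bigl(\sum_n R_{p,0}^{\omega_0+n}\bigr)_{p,2,(1)}$ I arrive at $R_p^{\omega_0\cdot 2}\sim R_p^{\omega_0}\oplus Y$.

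The main obstacle is to absorb the extra $R_p^{\omega_0}$ into $Y$. I would use that $R_p^{\omega_0+n}=\ell_p^{2^n}\otimes R_p^{\omega_0}$: the diagonal map $g\mapsto(g,\ldots,g)$ embeds $R_{p,0}^{\omega_0}$ isometrically into $R_{p,0}^{\omega_0+n}$, and the averaging map $(f_1,\ldots,f_{2^n})\mapsto(\bar f,\ldots,\bar f)$ with $\bar f=2^{-n}\sum_i f_i$ is a norm-one projection onto this copy. This decomposes $R_{p,0}^{\omega_0+n}\cong R_{p,0}^{\omega_0}\oplus Z_n$ uniformly in $n$, and pulling the decomposition through the $(p,2,(1))$-sum yields $Y\sim\bigl(\sum_n R_{p,0}^{\omega_0}\bigr)_{p,2,(1)}\oplus\bigl(\sum_n Z_n\bigr)_{p,2,(1)}$. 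The stability of $R_p^{\omega_0}$ under $(p,2,(1))$-sums (Remark~\ref{stable}) identifies the first factor with $R_p^{\omega_0}$, and since stability also gives $R_p^{\omega_0}\oplus R_p^{\omega_0}\sim R_p^{\omega_0}$, I deduce $R_p^{\omega_0}\oplus Y\sim Y$. Tracking the product of the constants at each step delivers the claimed universal constant $A$.
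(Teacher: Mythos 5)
The paper offers no proof of this statement at all---it is introduced as being ``essentially contained in'' the proof of \cite[Theorem 2.4]{DA}---so your reconstruction is doing genuine work, and its outline is correct and is essentially the standard unwinding argument from Alspach's memoir: peel off the constants at the limit ordinal $\omega_0\cdot 2$ via \cite[Lemma 2.1]{DA}, convert the independent sum of mean-zero pieces to a $(p,2,(1))$-sum by Rosenthal's inequality, split into the blocks $\al<\omega_0$ and $\al=\omega_0+n$, recognize the first block together with $L_p^0$ as $R_p^{\omega_0}$, and absorb that copy into $Y=(\sum_n R_{p,0}^{\omega_0+n})_{p,2,(1)}$. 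Two details in the absorption step deserve more care. First, to pull the decomposition $R_{p,0}^{\omega_0+n}\cong R_{p,0}^{\omega_0}\oplus Z_n$ through the $(p,2,(1))$-sum you need the projections to be uniformly bounded simultaneously in the $L_p$ \emph{and} the $L_2$ norms, since both enter the $(p,2,(1))$-norm; your averaging map is the conditional expectation onto the $\sigma$-algebra of the second tensor factor, hence of norm one in every $L_q$, so this does hold, but it is the reason the construction works and should be stated. Second, Remark~\ref{stable} identifies $(R_p^{\omega_0})_{p,2,(1)}$ with $R_p^{\omega_0}$, whereas your first factor is $(\sum_n R_{p,0}^{\omega_0})_{p,2,(1)}$ with the mean-zero subscript; the two differ by a summand $(\sum_n \R)_{p,2,(1)}\sim\ell_2$, which must itself be absorbed (for instance, a fixed normalized mean-zero vector in each coordinate spans a complemented $\ell_2$ inside $(\sum_n R_{p,0}^{\omega_0})_{p,2,(1)}$, and $\ell_2\oplus\ell_2\sim\ell_2$). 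With these two points made explicit the argument is complete and the universal constant $A$ is obtained by tracking the finitely many equivalence constants, each of which ($R_p$, the stability constant of Remark~\ref{stable}, and the norm-one conditional expectations) is independent of $n$.
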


We now recall the definition of ordinal $L_p$ index for separable Banach
spaces from \cite{BRS}. \bdfn For $ n \in \N \cup \{0\}$, let $D_n$
be the set of all $n$-strings of $0's$ and $1's$. For a separable
Banach space $X$, $X^{D_n}$ be the set of all functions from $D_n$
to $X$. Let $X^\mathcal{D} = \cup_{n=0}^\infty X^{D_n}$.

For $u \in X^\mathcal{D}$ we write $|u|=n$ if  $ u \in X^{D_n}$. For
$t\in D_n$ and $s\in D_m$ we denote by $t\cdot s$ element in
$D_{n+m}$  given by $t_1\cdots t_{n}\cdot s_1\cdots s_{m}$.

On $X^\mathcal{D}$ we define a strict partial order by $ u \prec v$
if $|u| < |v|$ and for $ k = |v| - |u|$, $u(t) = 2^{-{k/p}}
\sum\limits_{s \in {D_k}} v(t\cdot s)$.

Let $1\leq p<\infty$ and $0< \de \leq 1$. Let ${\ov X}^\de$ be the
set of all $u \in X^\mathcal{D}$ such that
\begin{equation} \label{eq1}
\de (\sum\limits_{t \in D_{|u|}} |c(t)|^p)^{1/p} \leq \parallel
\sum\limits_{t \in D_{|u|}} c(t) u(t)\parallel _X \leq
(\sum\limits_{t \in D_ {|u|}}|c(t)|^p)^{1/p}
\end{equation}
for all $ c \in {\R}^{D_{|u|}}$.

\begin{rem} \label{independent}
As a consequence of (\ref{eq1}) we observe that if
$u=(u_1,\cdots,u_{2^k})\in {\ov X}^\de$ then
$\{u_1,\cdots,u_{2^k}\}$ are linearly independent. In case of
$\de=1$, $\{u_1,\cdots,u_{2^k}\}$ spans $\ell_p^{2^k}$
isometrically.
\end{rem}

Let $H_0^{\de}(X) = \ov X^{\de}$. If $\al = \be + 1$ and
$H_{\be}^{\de}(X)$ has been defined, then take
\begin{align*}
 H_{\al}^{\de}(X) = \{ u \in H_{\be}^{\de}(X) : u \prec v~\mbox
 {for some}~v \in H_{\be}^{\de}(X)\}.
\end{align*}
If $\al$ is a limit ordinal we define $H_{\al}^{\de}(X) = \cap_{\be
< \al} H_{\be}^{\de}(X)$.

In \cite {BRS} it was proved that for $1\leq p <\iy$
if $L_p\not\hookrightarrow X$ then for any
$0<\de\leq 1$ there exists an
ordinal $\be<\omega_1$ such that
$H_{\be}^{\de}(X)=H_{\be+1}^{\de}(X)$. Let $ h_p(\de, X)$ be the
least ordinal $\al$ such that $H_{\al}^{\de}(X) =
H_{\al+1}^{\de}(X)$. If $L_p\not \hookrightarrow X$, we define $
h_p(X) = sup_{0<\de \leq 1} h_p(\de,X)$. If $L_p\hookrightarrow X$
by convention we take $h_p(X) = \omega_1$. In \cite{BRS} it was
proved that for $1\leq p <\iy$, $h_p(X)<\omega_1$ if and only if
$L_p\not\hookrightarrow X$.  \edfn

We will be using the following results repeatedly while calculating
the $L_p$ index of $R_p^{\omega_0}$, $\ell_p$ and $\ell_2$.

\begin{thm}\cite[Theorem 2.1]{BRS} \label{subspace}
If $X$ and $Y$ are  two separable Banach spaces such that $X\hookrightarrow Y$
then $h_p(X)\leq h_p(Y)$.
\end{thm}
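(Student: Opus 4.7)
The plan is to fix an isomorphism $T : X \to T(X) \ci Y$ with $\|T\|\|T^{-1}\| \le K$, normalize to $\tilde{T} = T/\|T\|$, and show that the coordinatewise map $u \mapsto \tilde{T}\circ u$ carries the derivation sets for $X$ into those for $Y$ with only a controlled contraction of the parameter $\de$. The crucial statement to prove by transfinite induction on $\al$ is
\[
u \in H_{\al}^{\de}(X) \Ra \tilde{T}\circ u \in H_{\al}^{\de/K}(Y).
\]

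For the base case $\al=0$, I would directly verify the two-sided inequality defining $\ov{Y}^{\de/K}$: the upper bound uses $\|\tilde{T}\|\le 1$, and the lower bound uses $\|\tilde{T}(x)\|_Y \ge K^{-1}\|x\|_X$, combined with $u \in \ov{X}^{\de}$. The successor step is immediate since the averaging relation $\prec$ is linear in the tree values, so $u \prec v$ forces $\tilde{T}\circ u \prec \tilde{T}\circ v$; if $v \in H_{\be}^{\de}(X)$ is the witness, then by the inductive hypothesis $\tilde{T}\circ v$ witnesses $\tilde{T}\circ u \in H_{\be+1}^{\de/K}(Y)$. The limit step is automatic from the intersection in the definition.

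To conclude, observe that the statement is trivial when $L_p \hookrightarrow Y$ (both sides equal $\omega_1$ by convention), so assume $L_p \not\hookrightarrow Y$; this forces $L_p \not\hookrightarrow X$ as well since $X \hookrightarrow Y$. In this regime the sequence $\{H_{\al}^{\de}(Z)\}_{\al}$ must decrease until it is empty, for a nonempty stabilized set $H_{\al}^{\de}(Z) = H_{\al+1}^{\de}(Z)$ would allow one to iterate the $\prec$-extension property and build an infinite averaging tree in $Z$ whose closed span contains an isomorph of $L_p$, contradicting the hypothesis. Hence $h_p(\de,Z)$ is the least $\al$ with $H_{\al}^{\de}(Z)=\es$, and the lemma yields $H_{\al}^{\de}(X)\neq\es \Ra H_{\al}^{\de/K}(Y)\neq\es$, so $h_p(\de,X) \le h_p(\de/K,Y) \le h_p(Y)$; taking the supremum over $\de \in (0,1]$ gives $h_p(X) \le h_p(Y)$. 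The step most likely to need care is precisely this stabilization-at-empty characterisation, which is what converts the pointwise containment of derivation sets into the desired ordinal inequality; it is a standard tree-extraction argument in the BRS framework but deserves a separate justification.
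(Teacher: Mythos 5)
This theorem is quoted from \cite[Theorem 2.1]{BRS} and the paper supplies no proof of its own; your argument is the standard one and is correct: composing each tree with the normalized embedding sends $H_{\al}^{\de}(X)$ into $H_{\al}^{\de/K}(Y)$ by transfinite induction, and the passage from this containment to $h_p(\de,X)\leq h_p(\de/K,Y)$ via the fact that, when $L_p$ does not embed, the derived sets can only stabilize at $\es$ (else an infinite averaging tree spans a copy of $L_p$) is exactly the mechanism of \cite{BRS}. The one cosmetic slip is that when $L_p\hookrightarrow Y$ it need not embed in $X$, so ``both sides equal $\omega_1$'' is unwarranted; the case is still trivial, though, since $h_p(X)\leq\omega_1$ always.
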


\begin{thm} \cite[Theorem 2.4]{BRS} \label{h1}
Let $1\leq p<\iy$, $0\leq \al <\omega_1$. Then $1\in
H_{\al}^1(R^{\al}_p)$.
\end{thm}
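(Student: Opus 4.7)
My plan is to prove the claim by transfinite induction on $\al$. The base case $\al = 0$ is immediate: $R_p^0 = [1]_{L_p}$ and the singleton tree $u(\es) = 1$ trivially satisfies (\ref{eq1}) with $\de = 1$, so $1 \in \ov{R_p^0}^1 = H_0^1(R_p^0)$. For a limit ordinal $\al$, the independent-sum construction $R_p^{\al} = (\Si_{\be<\al} R_p^{\be})_{Ind,p}$ supplies an isometric embedding $j_{\be} : R_p^{\be} \hookrightarrow R_p^{\al}$ fixing the constant $1$, for every $\be < \al$. An isometric embedding $X \hookrightarrow Y$ immediately gives $H_{\ga}^1(X) \ci H_{\ga}^1(Y)$ for every $\ga$ (extensions in $X$ remain extensions in $Y$, and (\ref{eq1}) is purely a norm condition), so the inductive hypothesis $1 \in H_{\be}^1(R_p^{\be})$ yields $1 \in H_{\be}^1(R_p^{\al})$ for each $\be < \al$; intersecting gives $1 \in H_{\al}^1(R_p^{\al})$.

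The successor step $\al \ra \al + 1$ is the heart of the argument. I realize $R_p^{\al+1} = \ell_p^2 \otimes R_p^{\al}$ inside $L_p(\{0,1\} \times \Om_{\al})$ as $X_0 \oplus_p X_1$, where $X_i$ is the isometric copy $2^{1/p}\chi_{\{i\} \times \Om_{\al}} \cdot R_p^{\al}$ of $R_p^{\al}$ supported on $\{i\} \times \Om_{\al}$. Take the level-$1$ tree $v = (v(0), v(1))$ with $v(i) = 2^{1/p}\chi_{\{i\} \times \Om_{\al}} \in X_i$; then $2^{-1/p}(v(0) + v(1)) = 1$, so $1 \prec v$, and disjoint supports make $v \in \ov{R_p^{\al+1}}^1$. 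Under the isometry $X_i \cong R_p^{\al}$ which sends $v(i) \mapsto 1$, the outer IH gives $v(i) \in H_{\al}^1(X_i)$. The same IH combined with the embedding $R_p^{\al} \hookrightarrow R_p^{\al+1}$ fixing $1$ gives $1 \in H_{\al}^1(R_p^{\al+1})$. Thus the successor step reduces to showing $v \in H_{\al}^1(R_p^{\al+1})$.

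For this final piece I would prove, by inner transfinite induction on $\ga$, the key lemma: if $u_0 \in H_{\ga}^1(X_0)$ and $u_1 \in H_{\ga}^1(X_1)$ are trees of equal level $n$, then the interleaved tree $v' \in (X_0 \oplus_p X_1)^{D_{n+1}}$ defined by $v'(0 \cdot \sigma) = u_0(\sigma)$ and $v'(1 \cdot \sigma) = u_1(\sigma)$ lies in $H_{\ga}^1(X_0 \oplus_p X_1)$. The base $\ga = 0$ is a direct check using disjoint supports to obtain exact $\ell_p$-additivity in (\ref{eq1}). For the inner successor $\ga \ra \ga+1$, pick extensions $\hat u_i \in H_{\ga}^1(X_i)$ of $u_i$; the main obstacle is that these two extensions may have different levels, which I resolve by truncating the longer one down to match. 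The necessary auxiliary fact---that $w' \prec w$ with $w \in H_{\ga}^1(X)$ forces $w' \in H_{\ga}^1(X)$---is itself a short parallel transfinite induction, the base case using that averaging in (\ref{eq1}) with $\de=1$ preserves equality. Once levels match, the inner IH applied to the matched pair produces an extension $\hat v$ of $v'$ in $H_{\ga}^1$, so $v' \in H_{\ga+1}^1$. The limit step of the lemma is automatic by intersection. Applying the lemma with $\ga = \al$ places $v$ in $H_{\al}^1(R_p^{\al+1})$ and completes the successor case.
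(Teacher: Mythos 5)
Your argument is correct, and it is essentially the standard one: the paper itself gives no proof of this statement (it is quoted from [BRS, Theorem 2.4]), and your base and limit cases (isometric embeddings fixing $1$, then intersecting) are exactly what that reference does. Your successor step, via the interleaving lemma for disjointly supported $X_0\oplus_p X_1$ together with the truncation fact that $w'\prec w\in H_{\ga}^1(X)$ forces $w'\in H_{\ga}^1(X)$, amounts to a re-derivation of [BRS, Lemma 2.5] --- the statement the paper quotes as ``FACT'' --- since your tree $v$ extends the doubled tree $\ov{1}=2^{-1/p}(1\oplus 1)$; all the auxiliary inductions you sketch (transitivity of $\prec$, level-matching by truncation) go through as claimed.
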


\textbf{FACT} \cite [Lemma 2.5]{BRS}: Let $X$ be a separable Banach,
$0<\de\leq 1$ and $\al<\omega_1$. Let $e\in H_{\al}^{\de}(X)$. Let
$\ov{e}$ be the element of $(X\oplus X)_p^\mathcal{D}$ defined by
$\ov{e}(t)=2^{\f{-1}{p}}(e(t)\oplus e(t))$ for all $t\in D_{|e|}$.
Then $\ov{e}\in H_{\al+1}^{\de}((X\oplus X)_p)$. As a consequence if
$X$ is isomorphic to its square and $h_p(X)>\al$, for some limit
ordinal $\al$ then $h_p(X)\geq\al+\omega_0$.

\begin{rem}\label{infinite}
Let $1\leq p<\iy$. It is easy to observe that
$h_p(\ell_p^{2^n})=n+1$. Hence by Theorem~\ref{subspace} for any
infinite dimensional space $X$, we have $h_p(X)\geq \omega_0$.
\end{rem}

\begin{rem}\label{r1}
$(a)$ It was proved by Rosenthal in \cite{Ro} that for
$1<p<\iy$, $\ell_p(\ell_2)\not\hookrightarrow X_p$ (which is
isomorphic to $R_p^{\omega_0}$).\\
$(b)$ For a sequence of scalars $\{w_n\}$, $0\leq w_n \leq 1$
and $2<p<\iy$ we recall the Rosenthal's condition which is:
for each $\epsilon>0$, $\Si_{w_n<\epsilon}{w_n}^{\f {2p}{p-2}}=\iy.$\\
If the sequence $\{w_n\}$ satisfies Rosenthal's condition then it was
proved in \cite[Theorem 2.4]{DA} that $R_p^{\omega_0\cdot2}\sim
(R_p^{\omega_0})_{p,2,(w_n)}$. It follows from \cite[Proposition
2.11]{DA}  that by breaking the collection $\{w_n\}$ into three
disjoint sub-collections namely $\{w_n^1\}$, $\{w_n^2\}$ and
$\{w_n^3\}$ satisfying the conditions; $\{w_n^1\}$ satisfies
Rosenthal's condition, $inf w_n^2>0$ and $\Si{(w_n^3)}^{\f
{2p}{p-2}}<\iy$, we get
\begin{equation*}
R_p^{\omega_0\cdot2}\sim (R_p^{\omega_0})_{p,2,(w_n^1)}\oplus
(R_p^{\omega_0})_{p,2,(1)} \oplus \ell_p(R_p^{\omega_0}).
\end{equation*}
 Since $\ell_2 \hookrightarrow R_p^{\omega_0}$, we get
$\ell_p(\ell_2)\hookrightarrow R_p^{\omega_0\cdot2}$.

$(c)$ It follows from $(a)$ and $(b)$ above that
$R_p^{\omega_0\cdot2}\not \hookrightarrow R_p^{\omega_0}$.

\end{rem}

\section{Main Results}

The following Lemma is  key to calculate $L_p$ index of $\ell_p$,
$\ell_2$ and $R_p^{\omega_0}$.

\begin{lem}\label{p,2,1}
Let $2<p<\iy$ and $X$ be a subspace of $L_p$. If for some
$0<\de\leq1$ and $n\in \N$, $H^{\de}_{\omega_0+n}(X)\not=\es$ then
there exists a constant $C$ (depending on $\de$, $p$ and $X$ only) such
that $R_{p,0}^{\omega_0+n}\stackrel{C}\hookrightarrow
(X)_{p,2,(1)}$.
\end{lem}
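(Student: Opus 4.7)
The plan is to induct on $n$, with the base case $n=0$ carrying the essential technical content and the inductive step being a structural maneuver.

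\textbf{Base case $(n=0)$.} From $H^{\delta}_{\omega_0}(X) = \bigcap_{k<\omega_0} H^{\delta}_k(X) \neq \emptyset$, pick an element $u$ admitting, for every $k \in \N$, an extension $v_k \in \overline{X}^{\delta}$ with $u \prec v_k$ and $|v_k|=|u|+k$. By Remark~\ref{independent} the $2^{|v_k|}$ entries of $v_k$ span a $\delta^{-1}$-isomorph of $\ell_p^{2^{|v_k|}}$ inside $X$, and since these entries live in $X \subseteq L_p$ they carry both $L_p$- and $L_2$-norms. I place the appropriately normalized mean-zero pieces derived from each $v_k$ (the differences $v_k(t) - 2^{k(1/p-1)}u(t')$ for $t'$ a prefix of $t$) into distinct coordinates of $(X)_{p,2,(1)}$. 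The resulting joint span has $(p,2,(1))$-norm given by the max of an $\ell_p$-sum of $L_p$-norms and an $\ell_2$-sum of $L_2$-norms across the blocks. Invoking Rosenthal's inequality, which underlies the $(p,2,(1))$-sum description of the independent-sum construction $R_p^{\omega_0} = (\Sigma \ell_p^{2^n})_{Ind,p}$ (cf.\ Remark~\ref{stable}), this description matches $R_{p,0}^{\omega_0}$ up to an absolute constant, yielding $R_{p,0}^{\omega_0} \stackrel{C_0}{\hookrightarrow} (X)_{p,2,(1)}$ with $C_0$ depending on $\delta,p,X$.

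\textbf{Inductive step $(n \to n+1)$.} Given $H^{\delta}_{\omega_0+n+1}(X) \neq \emptyset$, the definition of the hierarchy supplies $v \in H^{\delta}_{\omega_0+n}(X)$ with $u \prec v$ for some $u \in H^{\delta}_{\omega_0+n+1}(X)$; in particular $H^{\delta}_{\omega_0+n}(X) \neq \emptyset$, so the inductive hypothesis gives an embedding $R_{p,0}^{\omega_0+n} \stackrel{C_n}{\hookrightarrow} (X)_{p,2,(1)}$. To promote this to $R_{p,0}^{\omega_0+n+1}$ I use the structural identity $R_p^{\omega_0+n+1} = \ell_p^2 \otimes R_p^{\omega_0+n} \sim R_p^{\omega_0+n} \oplus_p R_p^{\omega_0+n}$, so that $R_{p,0}^{\omega_0+n+1}$ sits inside $R_{p,0}^{\omega_0+n} \oplus_p R_{p,0}^{\omega_0+n}$ modulo a one-dimensional factor. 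The stability of the $(p,2,(1))$-sum (Remark~\ref{sum}) lets me place two disjoint copies of the inductive embedding into independent coordinates of $(X)_{p,2,(1)}$, and the elementary two-summand estimate $\|\cdot\|_p \le \|\cdot\|_{p,2,(1)} \le 2^{1/2-1/p}\|\cdot\|_p$ shows the $\ell_p$-sum of the two copies embeds isomorphically into their $(p,2,(1))$-sum, producing the required $R_{p,0}^{\omega_0+n+1}$ embedding.

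\textbf{Main obstacle.} The delicate step is the base case, in which the $(p,2,(1))$-sum of the extracted $\ell_p^{2^k}$-blocks in $X$ must be identified with $R_{p,0}^{\omega_0}$. This requires (i) a careful passage to mean-zero pieces so that the blocks are truly $\ell_{p,0}^{2^k}$-blocks and not merely $\ell_p^{2^k}$-blocks, and (ii) an appeal to Rosenthal's inequality to reconcile the Banach-space norm of the independent-sum construction with the $(p,2,(w_k))$-sum formula. The dependence of $C$ on $X$ in the final statement reflects the $L_2/L_p$-norm ratios of the extracted tree leaves, which are governed by the specific position of $X$ inside $L_p$ and enter the Rosenthal estimate.
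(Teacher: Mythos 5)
There is a genuine gap, and it is fatal to the statement as formulated: your constant depends on $n$. The lemma explicitly requires $C$ to depend on $\de$, $p$ and $X$ only, and this uniformity is the whole point of the result --- in the proofs of Theorem~\ref{index} and Proposition~\ref{ellp1} one takes the $(p,2,(1))$-sum over all $n$ of the embeddings $R_{p,0}^{\omega_0+n}\hookrightarrow(X)_{p,2,(1)}$ to manufacture $(\sum R_{p,0}^{\omega_0+n})_{p,2,(1)}\sim R_p^{\omega_0\cdot2}$, which is impossible if the embedding constants blow up with $n$. Your inductive step multiplies the constant at each stage by at least the factor $2^{1/2-1/p}$ from the two-summand estimate together with the constant of $(X)_{p,2,(1)}\sim((X)_{p,2,(1)})_{p,2,(1)}$, so $C_n$ grows geometrically. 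Notice also that your promotion step $R_{p,0}^{\omega_0+n}\hookrightarrow(X)_{p,2,(1)}\Rightarrow R_{p,0}^{\omega_0+n+1}\hookrightarrow(X)_{p,2,(1)}$ never uses the hypothesis $H^{\de}_{\omega_0+n+1}(X)\ne\es$ beyond nonemptiness of the previous level; since $R_p^{\omega_0+n}\sim R_p^{\omega_0}$ for every $n$ anyway (the $R_p^{\al}$ are isomorphically distinct only at limit ordinals), a version of the lemma with $n$-dependent constants is essentially vacuous. The paper avoids induction entirely: from $u_n\in H^{\de}_{\omega_0+n}(X)$ it extracts a single $v_n\in H^{\de}_{\omega_0}(X)$ with $u_n\prec v_n$, hence $|v_n|=m\ge n$, and each of the $2^m$ coordinates of $v_n$ spawns, via successors $w_k^n\in H_0^{\de}(X)$, an independent copy of $R_{p,0}^{\omega_0}$; the identity $R_{p,0}^{\omega_0+m}\equiv(\sum_1^{2^m}R_{p,0}^{\omega_0})_{Ind,p}$ together with Rosenthal's inequality then yields the embedding with the single constant $R^2B/\de$. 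It is the width of the node sitting at level $\omega_0$, not iteration, that produces the ``$+n$''.

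The base case is also asserted rather than proved. The $(p,2,(1))$-sum of blocks that are merely $\de^{-1}$-isomorphic to $\ell_p^{2^k}$ in the $L_p$-norm is a weighted sum $(\sum\ell_p^{2^k})_{p,2,(w)}$ whose isomorphism class is governed by the $L_2$-norms of the chosen vectors (compare Remark~\ref{r1}$(b)$): if those $L_2$-norms decay fast enough the sum collapses to $\ell_p$, and nothing in the tree structure by itself gives a lower bound on them. Your ``main obstacle'' paragraph identifies exactly this difficulty but resolves it only by saying that $C$ ``reflects the $L_2/L_p$-norm ratios of the extracted tree leaves''; there is no uniform control of these ratios over an arbitrary subspace $X$, so this is not yet an argument. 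The paper's device is different: it forms, for each branch $t$, the abstract independent sum $X_t=(W_k^n(t))_{Ind,p}$ inside $(X)_{Ind,p}$ and applies Rosenthal's inequality at the level of whole spaces, $(X)_{Ind,p}\stackrel{R}\sim(X)_{p,2,(1)}$, rather than attempting to match individual $L_2$-norms against the canonical ones. If you want to salvage your outline you must both replace the induction by a one-step argument giving an $n$-free constant and supply the missing norm identification in the base case.
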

\begin{proof}
Without loss of generality we assume that $X$ consists of only mean
zero functions (otherwise we write $X= X_0\oplus L_p^0$ and work
with $X_0$). Let $u_n \in H_{\omega_0+n}^{\de}(X)$, $v_n \in
H_{\omega_0}^{\de}(X)$ such that $u_n\prec v_n$. So we have
$|v_n|\geq n$. For all $k\in \N$ we can find some $v_k^n\in
H_{k}^{\de}(X)$ such that $v_n\prec v_k^n$. Further for all $k$ we
can find some $w_k^n\in H_{0}^{\de}(X)$ such that $v_k^n\prec w_k^n$ and
$|w_k^n|\geq n+k+1$.

Let $|v_n|=m$. Then $m\geq n$ and $|w_k^n|\geq m+k+1$. For a fixed string
$t_1\cdot\cdot\cdot t_{m}$ of $0's$ and $1's$ we have
 \begin{equation} \label{eq2}
v_n(t_1 \cdots t_{m})=2^{\f{-({|w_k^n|-m})}{p}} \sum\limits_{s\in
D_{|w_k^n|-m}}w_k^n(t_1\cdot\cdot\cdot t_{m}\cdot s).
\end{equation}
Let $W_k^n({t_1\cdots t_{m}})$ be the subspace spanned by components
of $w_k^n$ which appear in the representation of
$v_n(t_1\cdots t_{m})$ in (\ref{eq2}) above. It is immediate
to observe that $\ell_p^{2^k}\stackrel{\f{1}{\de}}\hookrightarrow
W_k^n({t_1\cdots t_{m}})$ and this copy consists of mean zero
functions only (see Remark~\ref{independent}). Let $X_{t_1\cdots
t_{m}}=(W_k^n({t_1\cdots t_{m}}))_{Ind,p}$ , then
$(\ell_p^{2^k})_{Ind,p}\stackrel{\f{1}{\de}}\hookrightarrow
X_{t_1\cdots t_{m}} $, that is
\begin{equation}\label{eq3}
R_{p,0}^{\omega_0}\stackrel{\f{1}{\de}}\hookrightarrow X_{t_1\cdots
t_{m}}.
\end{equation}
By taking $(p,2,(1))$ sum on both sides of (\ref{eq3}) we
have $(\sum\limits_1^{2^m}R_{p,0}^{\omega_0})_{p,2,(1)}
\stackrel{\f{1}{\de}}\hookrightarrow ((X)_{Ind,p})_{p,2,(1)}$.
It follows from Rosenthal's inequality that
$(X)_{Ind,p}\stackrel{R}\sim(X)_{p,2,(1)}$ and
$(\sum\limits_1^{2^m}R_{p,0}^{\omega_0})_{Ind,p}
\stackrel{R}\sim(\sum\limits_1^{2^m}R_{p,0}^{\omega_0})_{p,2,(1)}$.
Since $R_{p,0}^{\omega_0+m}\equiv
(\sum\limits_1^{2^m}R_{p,0}^{\omega_0})_{Ind,p}$ so we have
$R_{p,0}^{\omega_0+m}\stackrel{\f {R^2}{\de}}\hookrightarrow
{((X)_{p,2,(1)})}_{p,2,(1)}$. Thus we have
$R_{p,0}^{\omega_0+m}\stackrel{\f {R^2B}{\de}}\hookrightarrow
(X)_{p,2,(1)}$ where $B$ is the constant such that
$(X)_{p,2,(1)}\stackrel{B}\sim((X)_{p,2,(1)})_{p,2,(1)}$. Since
$m\geq n$ so we get the desired result.
\end{proof}

We will use the following result for calculating the $L_p$ index of
$\ell_p$ and $\ell_2$ which we will prove later.
\begin{thm}\label{index}
Let $2<p<\iy$. Then $h_p(R_p^{\omega_0})=\omega_0\cdot2$.
\end{thm}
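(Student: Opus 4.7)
The plan is to prove both inequalities $h_p(R_p^{\omega_0}) \geq \omega_0 \cdot 2$ and $h_p(R_p^{\omega_0}) \leq \omega_0 \cdot 2$, with the lower bound following from standard machinery already available in the excerpt and the upper bound being the substantive part, where Lemma~\ref{p,2,1} is applied in contrapositive form.

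For the lower bound, I would start from Theorem~\ref{h1} with $\al = \omega_0$, which gives $1 \in H_{\omega_0}^1(R_p^{\omega_0})$ and hence $h_p(R_p^{\omega_0}) > \omega_0$. Since $R_p^{\omega_0} \sim X_p$ is isomorphic to its own square (this is a standard fact for Rosenthal's space, and also visible from the structure $(\sum R_p^n)_{Ind,p}$), the FACT stated before Remark~\ref{infinite} upgrades this to $h_p(R_p^{\omega_0}) \geq \omega_0 + \omega_0 = \omega_0 \cdot 2$.

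For the upper bound, I would argue by contradiction. Suppose $h_p(R_p^{\omega_0}) > \omega_0 \cdot 2$. Then by definition of $h_p(X) = \sup_{\de} h_p(\de, X)$, there exists $0 < \de \leq 1$ with $h_p(\de, R_p^{\omega_0}) > \omega_0 \cdot 2$, which forces $H_{\omega_0 + n}^{\de}(R_p^{\omega_0}) \neq \es$ for every $n \in \N$. Applying Lemma~\ref{p,2,1} for each such $n$ yields a single constant $C = C(\de, p, R_p^{\omega_0})$ and embeddings
\[
R_{p,0}^{\omega_0 + n} \stackrel{C}{\hookrightarrow} (R_p^{\omega_0})_{p,2,(1)}
\]
holding uniformly in $n$. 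By Remark~\ref{stable}, $R_p^{\omega_0}$ is stable under $(p,2,(1))$ sums, so $(R_p^{\omega_0})_{p,2,(1)} \sim R_p^{\omega_0}$. Assembling the individual embeddings coordinatewise and using Remark~\ref{sum} (iterated $(p,2,(1))$ sum collapses) produces
\[
\Bigl(\sum_n R_{p,0}^{\omega_0 + n}\Bigr)_{p,2,(1)} \hookrightarrow \bigl((R_p^{\omega_0})_{p,2,(1)}\bigr)_{p,2,(1)} \sim R_p^{\omega_0}.
\]
By Theorem~\ref{alspach}, the left-hand side is isomorphic to $R_p^{\omega_0 \cdot 2}$, so we obtain $R_p^{\omega_0 \cdot 2} \hookrightarrow R_p^{\omega_0}$, contradicting Remark~\ref{r1}(c).

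The main obstacle is making sure the embedding constant $C$ produced by Lemma~\ref{p,2,1} is genuinely uniform in $n$ (it is, because in the lemma $C$ depends only on $\de$, $p$, and the ambient space $X = R_p^{\omega_0}$, not on the particular $u_n$ chosen), so that the coordinatewise sum of these embeddings remains bounded in the $(p,2,(1))$ norm. The rest is bookkeeping with the two stability properties in Remarks~\ref{sum} and \ref{stable} to reduce the iterated $(p,2,(1))$ construction back to $R_p^{\omega_0}$ itself, together with the key negative result Remark~\ref{r1}(c) which provides the contradiction.
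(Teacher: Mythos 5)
Your proof is correct and follows essentially the same route as the paper: the contradiction argument for the upper bound via Lemma~\ref{p,2,1}, the uniformity of the constant $C$, Theorem~\ref{alspach}, Remark~\ref{stable}, and Remark~\ref{r1}(c) is exactly the paper's. The only cosmetic difference is in the lower bound, where the paper uses $R_p^{\omega_0+k}\sim R_p^{\omega_0}$ together with Theorem~\ref{h1} at each level $\omega_0+k$, while you apply Theorem~\ref{h1} once at $\omega_0$ and then invoke the FACT about spaces isomorphic to their squares; both are valid.
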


We believe the following Lemma is essentially
known, however we include the proof for completion.
Let $\{e_n\}$ denotes the standard unit vector basis of $\ell_p$.

\begin{lem}\label{l4}
Let $1 \leq p < \infty, \ u=(u_1,\cdots, u_{2^k})\in H_0^1(\ell_p)$
and $u_i=\sum\limits_{j\in {N_i} }{b_j^i e_j}$, where $N_i\subseteq
\N$, $1\leq i\leq 2^k$. Then for $i\not=j$, $1\leq i,j \leq 2^k$
 we have $N_{i}\cap N_{j} = \es$.
\end{lem}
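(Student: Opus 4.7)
The plan is to exploit the fact that $u \in H_0^1(\ell_p)$ gives an \emph{isometric} embedding of $\ell_p^{2^k}$ into $\ell_p$ sending $e_i \mapsto u_i$, because with $\de = 1$ the defining inequality in~(\ref{eq1}) collapses to an equality. First I would extract two pointwise norm identities from this isometry: for every pair $i \ne j$,
\[
\|u_i\|_p = \|u_j\|_p = 1, \qquad \|u_i + u_j\|_p = \|u_i - u_j\|_p = 2^{1/p},
\]
so that
\[
\|u_i + u_j\|_p^p + \|u_i - u_j\|_p^p \;=\; 4 \;=\; 2\bigl(\|u_i\|_p^p + \|u_j\|_p^p\bigr).
\]

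Next I would reduce the disjointness claim to a coordinatewise scalar inequality. Writing out both sides of the identity above as sums over $k \in \N$ gives
\[
\sum_{k \in \N} \Bigl( |u_i(k) + u_j(k)|^p + |u_i(k) - u_j(k)|^p - 2|u_i(k)|^p - 2|u_j(k)|^p \Bigr) \;=\; 0.
\]
The key elementary fact is that, for $p \ne 2$, the scalar function
\[
\psi_p(a,b) \;=\; |a+b|^p + |a-b|^p - 2\bigl(|a|^p + |b|^p\bigr)
\]
is of strictly constant sign (positive for $p > 2$, negative for $1 \le p < 2$) away from the locus $ab = 0$, where it vanishes. This is standard; it can be proved by fixing $a = 1$, $t = b$, expanding $\psi_p(1,t)$, and checking a one-variable convexity/concavity statement by two derivatives, or by invoking the equality case in Clarkson's inequality. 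Granted this, all summands above have the same sign, so their sum vanishes only if every one vanishes, forcing $u_i(k) u_j(k) = 0$ for every $k$ and hence $N_i \cap N_j = \es$.

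The only real obstacle is the scalar inequality for $\psi_p$, which is a routine calculus exercise. Let me remark that the hypothesis $p \ne 2$ is implicitly needed: for $p = 2$ the pair $u_1 = \tfrac{1}{\sqrt{2}}(e_1 + e_2)$, $u_2 = \tfrac{1}{\sqrt{2}}(e_1 - e_2)$ lies in $H_0^1(\ell_2)$ yet has $N_1 = N_2$, so the statement cannot hold as written at $p = 2$; but the $L_p$-index calculations the lemma is invoked for concern $p > 2$, where the argument above applies.
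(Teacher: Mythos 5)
Your proof is correct, but it takes a genuinely different route from the paper. The paper argues by contradiction: assuming $n_0\in N_1\cap N_2$, it chooses the coefficients $c_1=b_{n_0}^2$, $c_2=-b_{n_0}^1$ so as to annihilate the shared coordinate, and then estimates the remaining coordinates by $\sum_{j\neq n_0}|c_1b_j^1+c_2b_j^2|^p\leq |c_1|^p\sum_{j\neq n_0}|b_j^1|^p+|c_2|^p\sum_{j\neq n_0}|b_j^2|^p$ to get a strict drop below $|c_1|^p+|c_2|^p$. You instead test the isometry only on the four combinations $u_i\pm u_j$ and exploit the equality case of the scalar two-point inequality $|a+b|^p+|a-b|^p\gtrless 2(|a|^p+|b|^p)$, using that all coordinatewise contributions $\psi_p(u_i(k),u_j(k))$ have one sign when $p\neq 2$ and that the series converges absolutely, so a vanishing sum forces $u_i(k)u_j(k)=0$ for every $k$. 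Your argument is in fact the more robust of the two: the paper's coordinatewise bound is literally the subadditivity $|x+y|^p\leq|x|^p+|y|^p$, which holds only at $p=1$, whereas your scalar lemma (with its equality characterization $ab=0$) is valid for all $p\neq 2$ and is easily checked via convexity/concavity of $t\mapsto t^{p/2}$ together with $(x+y)^{p/2}\gtrless x^{p/2}+y^{p/2}$. Your observation about $p=2$ is also well taken and not merely a pedantic caveat: the lemma as stated covers $1\leq p<\infty$, yet your pair $u_1=\tfrac{1}{\sqrt2}(e_1+e_2)$, $u_2=\tfrac{1}{\sqrt2}(e_1-e_2)$ does lie in $H_0^1(\ell_2)$ with $N_1=N_2$, so the exclusion of $p=2$ is necessary; this is harmless for the paper's applications, which concern $p>2$ (and $p=1$, where disjointness also holds by your argument).
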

\begin{proof}
If possible let $N_1\cap N_2\not=\es$  and ${n_0}\in N_1\cap N_2$.
Since $u\in H_0^1(\ell_p)$, we have $||
c_1u_1+c_2u_2||_p^p=|c_1|^p+|c_2|^p$,
$\sum\limits_{j\in N_i} {|b_j^i|^p}=1$ for all $c_1,c_2\in\R$ and $1\leq i \leq 2^k$.
Then we have
\begin{eqnarray*}
||c_1u_1+c_2u_2||_p^p &\leq& |c_1b_{n_0}^1 +c_2b_{n_0}^2|^p+
|c_1|^p{\sum\limits_{i\neq {n_0}}} {|b_i^1|^p}
+|c_2|^p{\sum\limits_{i\neq {n_0}}}{|b_i^2|^p}.
\end{eqnarray*}
Taking $c_1=b_{n_0}^2$ and $c_2=-b_{n_0}^1$, we get
\begin{eqnarray*}
||c_1u_1+c_2u_2||_p^p &\leq & |c_1|^p{\sum\limits_{i\neq {n_0}}}
{|b_i^1|^p}+|c_2|^p{\sum\limits_{i\neq {n_0}}}{|b_i^2|^p}\\
&=&|c_1|^p(1-|b_{n_0}^1|^p)+|c_2|^p(1-|b_{n_0}^2|^p)\\
&<&|c_1|^p+|c_2|^p.
\end{eqnarray*}

This contradicts that $||c_1u_1+c_2u_2||_p^p=|c_1|^p+|c_2|^p$.

Hence $N_1\cap N_2=\es$. Similarly we can show that $N_i\cap
N_j=\es$ for $i\not=j$, $1\leq i,j\leq 2^k$.
\end{proof}

\begin{lem} \label{l5}
Let $1\leq p <\iy$. Then $h_p(1, \ell_p)=\omega_0$.
\end{lem}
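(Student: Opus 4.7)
The plan is to prove both directions $h_p(1,\ell_p)\geq\omega_0$ and $h_p(1,\ell_p)\leq\omega_0$ by establishing two facts: (a) $H_n^1(\ell_p)\neq\emptyset$ for every finite $n$, and (b) $H_{\omega_0}^1(\ell_p)=\emptyset$. Granted both, the decreasing sequence $(H_n^1(\ell_p))_{n<\omega_0}$ of non-empty sets cannot stabilize at any finite stage, for otherwise its intersection $H_{\omega_0}^1(\ell_p)$ would coincide with that common value and be non-empty, contradicting (b). Thus $H_n^1(\ell_p)\supsetneq H_{n+1}^1(\ell_p)$ for every finite $n$, yielding $h_p(1,\ell_p)\geq\omega_0$, while (b) trivially supplies $H_{\omega_0}^1(\ell_p)=H_{\omega_0+1}^1(\ell_p)=\emptyset$ and hence the reverse inequality.

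Fact (a) is immediate: $R_p^n=\ell_p^{2^n}$ embeds isometrically into $\ell_p$, so the element $1\in H_n^1(R_p^n)$ provided by Theorem~\ref{h1} transports through the embedding to an element of $H_n^1(\ell_p)$.

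The substance lies in fact (b). Suppose for contradiction that $u\in H_{\omega_0}^1(\ell_p)$. Taking $v_n$ to be the topmost element of a length-$n$ chain in $H_0^1(\ell_p)$ that witnesses $u\in H_n^1(\ell_p)$, we obtain for every $n$ a single $v_n\in H_0^1(\ell_p)$ with $u\prec v_n$ and $K_n:=|v_n|-|u|\geq n$. Fix any $t\in D_{|u|}$; the order relation gives
\[
u(t)=2^{-K_n/p}\sum_{s\in D_{K_n}} v_n(t\cdot s),
\]
and Lemma~\ref{l4} applied to $v_n$ forces the $2^{K_n}$ vectors $\{v_n(t\cdot s):s\in D_{K_n}\}$ to have pairwise disjoint supports in the standard basis of $\ell_p$, each of norm one. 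Consequently the support of $u(t)$ splits as a disjoint union of $2^{K_n}$ coordinate blocks on each of which the $\ell_p$-mass of $u(t)$ is exactly $2^{-K_n}$. Writing $u(t)=\sum_j a_j e_j$, every coefficient must satisfy $|a_j|^p\leq 2^{-K_n}$ for every $n$, so $u(t)=0$, contradicting $\|u(t)\|_p=1$.

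The main obstacle is the coefficient-mass bookkeeping in the last step, but once Lemma~\ref{l4} is used to force disjoint supports of the components of $v_n$, the contradiction is essentially automatic. The only care needed is the initial observation that from a length-$n$ witnessing chain one extracts a single $v_n\in H_0^1(\ell_p)$ with arbitrarily large gap $|v_n|-|u|$; this converts the chain condition on $u$ into a single coordinate partition of $u(t)$ of arbitrarily fine mesh, which no non-zero element of $\ell_p$ can satisfy.
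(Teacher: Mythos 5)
Your proof is correct and takes essentially the same route as the paper: the lower bound comes from the isometric copies of $\ell_p^{2^n}$ (the paper notes $h_p(1,\ell_p^{2^n})=n+1$ rather than invoking Theorem~\ref{h1}, but this is the same observation), and the upper bound comes from Lemma~\ref{l4} forcing the components of a witnessing $v_n$ to have disjoint supports, whence every coordinate of $u(t)$ is bounded by $2^{-K_n/p}\to 0$, contradicting $\|u(t)\|_p=1$. The paper phrases this as showing each $u\in H_0^1(\ell_p)$ drops out at some finite stage $N_u$ rather than assuming $u\in H_{\omega_0}^1(\ell_p)$ outright, but the two formulations are equivalent.
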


\begin{proof}
Let $u=(u_1,\cdots, u_{2^k})\in H_0^1(\ell_p)$ and
$u_{i}=\sum\limits_{j\in {N_{i}}}{a_j^{i}e_j}$, where $N_{i}$  is a
subset of $\N$, $1\leq i\leq 2^k$. If there doesn't exists any
$N_u<\omega_0$ such that $u\not\in H_{N_u}^1(\ell_p)$, we can find
$v_n\in H_0^1(\ell_p)$ with $|v_n|\uparrow \omega_0$ and $u\prec
v_n$ for each $n$. Thus for each $i$ we can find some fixed $k$
string $t_1\cdots t_k$ of $0's$ and $1's$ such that
$u_{i}=2^{\f{-(|v_n|-k)}{p}}\sum\limits_{s\in
{D_{|v_n|-k}}}v_n(t_1\cdot\cdot\cdot t_{k}\cdot s )$. Using
Lemma~\ref{l4} we have for $s\in{D_{|v_n|-k}}$ and any $k$ string
$t_1\cdot\cdot\cdot t_{k}$ of $0's$ and $1's$,
$v_n(t_1\cdot\cdot\cdot t_{k}\cdot s)=\sum\limits_{j\in N_s}
c_j^ne_j$, where $\cup_{s\in D_{|v_n|-k}} N_s=N_{i}$ and
$N_{s_1}\cap N_{s_2}=\es$ if $s_1\neq s_2$. Thus for each $j\in N_{i}$
and $n\in\N$ can find some $j_0$ such that
$a_j^{i}={\f{c_{j_0}^n}{\f{2^{{|v_n|-k}}}{p}}}$,
.

Since for all $n$ and $j\in N_{i}$ we have $|c^n_{j}| \leq 1$ we
conclude that $a_j^{i}=0$ for all $j\in \N_{i}$, which contradicts
$||u_{i}||=1$, $1\leq i\leq 2^k$. Thus there exists some
$N_u<\omega_0$ such that $u\not\in H_{N_u}^1(\ell_p)$. Hence
$h_p(1,\ell_p)\leq \omega_0$. But it is easy to observe that $h_p(1,
\ell_p^{2^n})=n+1$. Thus we have $h_p(1,\ell_p)=\omega_0$.
\end{proof}

\begin{prop} \label{ellp1}
Let $2<p<\iy$. Then $h_p(\ell_p)=\omega_0.$
\end{prop}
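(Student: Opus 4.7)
The lower bound $h_p(\ell_p)\ge\omega_0$ is immediate from Remark \ref{infinite}. For the reverse inequality, the plan is to show $H_{\omega_0}^{\delta}(\ell_p)=\es$ for every $0<\delta\le 1$, which will give $h_p(\delta,\ell_p)\le\omega_0$ and hence, taking the supremum over $\delta$, $h_p(\ell_p)\le\omega_0$.

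The argument will extend Lemma \ref{l5}, which handles the case $\delta=1$ using the disjoint-supports Lemma \ref{l4}. Suppose towards a contradiction that some $u=(u_1,\ldots,u_{2^k})\in H_{\omega_0}^{\delta}(\ell_p)$ with $u_i=\sum_{j\in N_i} a_j^i e_j$; then for each $n$ we may select $v_n\in H_0^{\delta}(\ell_p)$ with $u\prec v_n$ and $|v_n|\to\iy$. When $\delta<1$ the leaves $v_n(t)$ need not be disjointly supported, so Lemma \ref{l4} no longer applies directly. The idea is to correct this via a Bessaga--Pelczynski/gliding-hump extraction in $\ell_p$: any normalized $\delta$-isomorphic copy of the $\ell_p^{N}$-basis in $\ell_p$ admits, after passing to a subsequence, an arbitrarily small perturbation to a disjointly supported block basic sequence. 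A diagonal argument over $n$ will then allow simultaneous perturbation of all the leaves of $v_n$ to disjoint supports while preserving the averaging identity
\begin{equation*}
u_i = 2^{-(|v_n|-k)/p} \sum_{s\in D_{|v_n|-k}} v_n(t_i\cdot s)
\end{equation*}
up to a controllable error.

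Once the leaves are (approximately) disjointly supported, the proof of Lemma \ref{l5} carries over: for each $j$ in the support of $u_i$, the averaging identity yields $|a_j^i|\le 2^{-(|v_n|-k)/p}$ plus a perturbation error that tends to $0$, forcing $a_j^i=0$ in the limit and contradicting $\|u_i\|\ge\delta>0$. The hard part will be executing this perturbation coherently across all levels of the extension tree while preserving both the averaging relation and a positive lower bound on the basis-equivalence constant throughout the extraction; once this is controlled the argument concludes as in Lemma \ref{l5}. A softer but weaker alternative would apply Lemma \ref{p,2,1} to uniformly embed $R_{p,0}^{\omega_0+n}\hookrightarrow (\ell_p)_{p,2,(1)}$ for all $n$, sum via Theorem \ref{alspach} and Remark \ref{sum}, and contradict Remark \ref{r1}(c) through the identification $(\ell_p)_{p,2,(1)}\sim X_p$ obtained from Rosenthal's characterization of $X_{p,w}$-spaces; however, that approach only yields $h_p(\delta,\ell_p)\le\omega_0+N(\delta)$ and does not by itself produce the sharp bound $\omega_0$.
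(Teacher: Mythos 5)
Your lower bound is fine, but your main route is not carried out and faces a real obstruction at exactly the point you flag as ``the hard part.'' The Bessaga--Pelczynski/gliding-hump perturbation produces disjoint supports only after passing to a subsequence of an infinite (weakly null) sequence; here the leaves of $v_n$ form a \emph{finite} family of $2^{|v_n|}$ vectors, every one of which is needed in the averaging identity that reconstructs $u$, so you cannot discard any of them. A finite family that is merely $\de$-equivalent to the $\ell_p^{2^{|v_n|}}$ basis need not be a small perturbation of a disjointly supported family, and even if each level could be disjointified, you would have to do so coherently with the relations $u\prec v_n$ for all $n$ simultaneously while keeping the errors summable. Nothing in the proposal controls this, so as written your argument only covers $\de=1$, i.e.\ it does not go beyond Lemma~\ref{l5}.

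Moreover, the ``softer alternative'' you set aside is essentially the paper's proof, and your reason for discarding it is mistaken. The point you miss is the dichotomy supplied by the FACT together with the isometry $(\ell_p\oplus\ell_p)_p\equiv\ell_p$: if $h_p(\ell_p)>\omega_0$ then $H^{\de}_{\omega_0}(\ell_p)\neq\es$ for some $\de$, hence $H^{\de}_{\omega_0+1}(\ell_p)\neq\es$ with the \emph{same} $\de$, and inductively $H^{\de}_{\omega_0+n}(\ell_p)\neq\es$ for every $n$. Thus $h_p(\ell_p)$ is either $\omega_0$ or at least $\omega_0\cdot2$; there is no intermediate value $\omega_0+N(\de)$ to worry about. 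Lemma~\ref{p,2,1} then applies with a single constant $C=C(\de,p)$ uniformly in $n$, and taking $(p,2,(1))$ sums, Theorem~\ref{alspach} yields $R_p^{\omega_0\cdot2}\hookrightarrow((\ell_p)_{p,2,(1)})_{p,2,(1)}\hookrightarrow((R_p^{\omega_0})_{p,2,(1)})_{p,2,(1)}\sim R_p^{\omega_0}$ by Remark~\ref{stable}, contradicting Remark~\ref{r1}. This closes the argument and gives the sharp value $\omega_0$; you should promote it from a footnote to the actual proof rather than pursue the perturbation scheme.
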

\begin{proof}
It follows from Lemma~\ref{l5} that $h_p(\ell_p)\geq\omega_0$. If
$h_p(\ell_p)>\omega_0$ then by using FACT we have
$h_p(\ell_p)\geq\omega_0\cdot2.$ We know that $\ell_p\hookrightarrow
R_p^{\omega_0}$ thus  Theorem ~\ref{index} and Theorem
~\ref{subspace} implies that $h_p(\ell_p)=\omega_0\cdot 2.$ Thus we can find some
$0<\de\leq1$ such that $H^{\de}_{\omega_0}(\ell_p)\not=\es$.
Then again by FACT we have $H^{\de}_{\omega_0+1}((\ell_p\oplus \ell_p)_p)
\not=\es$. Since $(\ell_p \oplus \ell_p)_p \equiv \ell_p$, we have
$H^{\de}_{\omega_0+1}(\ell_p)\not=\es$. By similar arguments we can show that
$H^{\de}_{\omega_0+n}(\ell_p)\not=\es$ for all $n\in \N$. Thus by
Lemma~\ref{p,2,1} there exists some constant $C$ such that
 $R_{p,0}^{\omega_0+n}\stackrel{C}\hookrightarrow
(\ell_p)_{p,2,(1)}$ for all $n\in\N$ . By taking $(p,2,(1))$
sum on both sides we have
$(\sum R_{p,0}^{\omega_0+n})_{p,2,(1)}\stackrel{C}\hookrightarrow
({(\ell_p)_{p,2,(1)}})_{p,2,(1)}$. Now it follows from Theorem~\ref{alspach} that
$R_{p}^{\omega_0\cdot2}\stackrel{CA}\hookrightarrow
((\ell_p)_{p,2,(1)})_{p,2,(1)}.$ But again by Remark~\ref{stable}
there exists some constant $B$ such that
$(R_p^{\omega_0})_{p,2,(1)}\stackrel{B} \sim R_p^{\omega_0}$. Also
we have $((\ell_p)_{p,2,(1)})_{p,2,(1)}\stackrel{(c)}\hookrightarrow
((R_p^{\omega_0})_{p,2,(1)})_{p,2,(1)}\stackrel{B^2}\sim
R_p^{\omega_0}$. Thus
$R_{p}^{\omega_0\cdot2}\stackrel{CAB^2}\hookrightarrow
R_p^{\omega_0}$. Which is a contradiction to Remark~\ref{r1}. Thus
$h_p(\ell_p)=\omega_0.$
\end{proof}

Now we will calculate the ordinal $L_p$ index of $\ell_2$.
\begin{prop}\label{ell2}
Let $2<p<\iy$. Then $h_p(\ell_2)=\omega_0.$
\end{prop}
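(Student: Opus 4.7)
The lower bound $h_p(\ell_2)\geq\omega_0$ is immediate from Remark~\ref{infinite} since $\ell_2$ is infinite-dimensional. For the upper bound I would argue by contradiction, paralleling the strategy of Proposition~\ref{ellp1}. Suppose $h_p(\ell_2)>\omega_0$, so there exists $\de\in(0,1]$ with $H^{\de}_{\omega_0}(\ell_2)\not=\es$.

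Applying the FACT once to such an element and then transferring along the isomorphism $(\ell_2\oplus\ell_2)_p\sim\ell_2$ (with some Banach-Mazur constant $K$ depending on $p$), I obtain $H^{\de'}_{\omega_0+1}(\ell_2)\not=\es$ for $\de'=\de/K>0$. The transfer is routine: the $\prec$-relation is preserved by any linear isomorphism $T$ (since $T(u)(t)=T(u(t))$ commutes with averaging), and the inequalities in (\ref{eq1}) are rescaled by the norms of $T$ and $T^{-1}$. Lemma~\ref{p,2,1} applied with $X=\ell_2\ci L_p$ (realized, say, as the closed span of i.i.d.\ Gaussians) then yields
\begin{equation*}
R_{p,0}^{\omega_0+1}\stackrel{C}\hookrightarrow (\ell_2)_{p,2,(1)}
\end{equation*}
for some constant $C$.

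The next step is to identify $(\ell_2)_{p,2,(1)}$. On the Gaussian copy of $\ell_2\ci L_p$, the $L_p$ and $L_2$ norms are proportional: $||x||_p=c_p||x||_2$. Writing $a_n=||x_n||_2$, the norm
\begin{equation*}
||(x_n)||_{p,2,(1)}=max\{c_p(\sum a_n^p)^{1/p},\,(\sum a_n^2)^{1/2}\}
\end{equation*}
is equivalent to $(\sum a_n^2)^{1/2}$, since for $p\geq 2$ and positive sequences one has $(\sum a_n^p)^{1/p}\leq(\sum a_n^2)^{1/2}$. Hence $(\ell_2)_{p,2,(1)}$ is isomorphic to $\ell_2(\ell_2)\cong\ell_2$.

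Combining, $R_{p,0}^{\omega_0+1}\hookrightarrow\ell_2$. But $R_{p,0}^{\omega_0+1}$ contains (the mean-zero part of) $R_p^{\omega_0}=X_p$, which in turn contains an isomorphic copy of $\ell_p$ (as $X_p$ is an $\mathcal{L}_p$-space containing $\ell_p$ complementedly). This forces $\ell_p\hookrightarrow\ell_2$, contradicting $p>2$. Therefore $h_p(\ell_2)=\omega_0$. The main subtlety, compared with Proposition~\ref{ellp1}, is that $\ell_2\oplus_p\ell_2$ is only isomorphic (not isometric) to $\ell_2$, so the $\de$-constant must be rescaled after invoking the FACT; however, since we only need one iteration to trigger Lemma~\ref{p,2,1}, no uniformity in $n$ is required, and the identification $(\ell_2)_{p,2,(1)}\cong\ell_2$ is where the argument genuinely diverges from the $\ell_p$ case.
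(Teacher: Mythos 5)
Your proof is correct and follows essentially the same route as the paper: assume $h_p(\ell_2)>\omega_0$, use the FACT to reach $H^{\de}_{\omega_0+1}(\ell_2)\neq\es$, apply Lemma~\ref{p,2,1}, identify $(\ell_2)_{p,2,(1)}\sim\ell_2$, and contradict $\ell_p\not\hookrightarrow\ell_2$. The only cosmetic differences are that the paper reaches $H^{\de}_{\omega_0+1}(\ell_2)\neq\es$ by first establishing $h_p(\ell_2)=\omega_0\cdot2$ via Theorem~\ref{index} rather than by rescaling $\de$ after a single application of the FACT, and it identifies $(\ell_2)_{p,2,(1)}$ through Remark~\ref{stable} and $\ell_2\sim(\mathbb{R})_{p,2,(1)}$ rather than by your direct Gaussian computation.
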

\begin{proof}
It follows from  Remark~\ref{infinite} that $h_p(\ell_2)\geq \omega_0$. If we
assume $h_p(\ell_2)>\omega_0$ then by FACT it follows that $h_p(\ell_2) \geq
\omega_0\cdot2.$ Since $\ell_2\hookrightarrow R_p^{\omega_0}$, by
Theorem~\ref{index} and Theorem~\ref{subspace} we have that
$h_p(\ell_2)=\omega_0\cdot 2$. Thus we can find some $0<\de \leq 1$ such
that $H^{\de}_{\omega_0+1}(\ell_2)\not=\es$. By using
Lemma~\ref{p,2,1} we have $R_{p,0}^{\omega_0}\hookrightarrow
(\ell_2)_{p,2,(1)}$. Remark ~\ref{stable} and the fact
 $\ell_2 \sim (\mathbb{R})_{p,2,(1)}$ implies that
  $\ell_2$ is stable under taking $(p,2,(1))$ sum. Thus we conclude
$R_{p}^{\omega_0}\hookrightarrow \ell_2$, which is not possible as
 $\ell_p\hookrightarrow R_p^{\omega_0}$. This shows $h_p(\ell_2)=\omega_0.$
\end{proof}

For $2<p<\iy$, let $X$ be infinite dimensional subspace of $L_p$.
In the following Theorem we show that value of $L_p$ index of $X$
can have only three possible choices.

\begin{thm} \label{choice}
Let $2 < p < \infty$ and $X$ be an infinite dimensional subspace
of $L_p $. Then either $h_p(X)=\omega_0$, $h_p(X)=\omega_0\cdot2$
or $h_p(X)\geq\omega_0^2$.
\end{thm}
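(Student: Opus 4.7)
The plan is to rule out values of $h_p(X)$ in the two open intervals $(\omega_0, \omega_0\cdot 2)$ and $(\omega_0\cdot 2, \omega_0^2)$; combined with $h_p(X)\ge \omega_0$ from Remark~\ref{infinite}, this gives the trichotomy.

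For the first gap I argue by contradiction. Suppose $h_p(X)=\omega_0+n$ for some $1\le n<\omega_0$. Since $\omega_0+n$ is a successor and $h_p(X)=\sup_{\delta} h_p(\delta, X)$, the supremum is attained and $H^\delta_{\omega_0+n-1}(X)\neq\es$ for some $\delta>0$. Lemma~\ref{p,2,1} then produces $R^{\omega_0+n-1}_{p,0}\stackrel{C}\hookrightarrow (X)_{p,2,(1)}$. Using that $R_p^{\omega_0+k}\sim R_p^{\omega_0}\sim X_p$ for every finite $k$, together with Theorem~\ref{index} and Theorem~\ref{subspace}, this already forces $h_p((X)_{p,2,(1)})\ge\omega_0\cdot 2$. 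Moreover, $(X)_{p,2,(1)}$ is stable under $(p,2,(1))$-sums by Remark~\ref{sum} and is hence isomorphic to its square with a universal constant, so the iteration strategy of Proposition~\ref{ellp1} applies to it: the FACT combined with Lemma~\ref{p,2,1} and Theorem~\ref{alspach} upgrades the embedding to $R_p^{\omega_0\cdot 2}\hookrightarrow (X)_{p,2,(1)}$.

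The crucial step is then to transfer this information back to $X$ itself. The plan is to take a canonical witness $w\in H^1_{\omega_0+n}(R_p^{\omega_0+n})$ furnished by Theorem~\ref{h1}, push it through the composed embedding $R_p^{\omega_0+n}\hookrightarrow R_p^{\omega_0\cdot 2}\hookrightarrow (X)_{p,2,(1)}$ to land inside $H^{\delta'}_{\omega_0+n}((X)_{p,2,(1)})$, and then project component-wise back to $X$: each entry of a vector in $(X)_{p,2,(1)}$ is a sequence in $X$, and the $\ell_p$-part of the $(p,2,(1))$-norm dominates the $X$-coordinates in a way that allows an honest $X$-valued tree witness to be extracted, at the cost of a worse $\delta''>0$. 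This would give $H^{\delta''}_{\omega_0+n}(X)\neq\es$, contradicting $h_p(X)=\omega_0+n$. I expect this pull-back to be the main obstacle, since $h_p((X)_{p,2,(1)})$ can strictly exceed $h_p(X)$ in general; one must exploit the specific structured form of the transplanted witness coming from $R_p^{\omega_0+n}$ rather than handling an arbitrary element of $H^{\delta'}_{\omega_0+n}((X)_{p,2,(1)})$.

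The second gap $(\omega_0\cdot 2, \omega_0^2)$ is ruled out by iterating the same scheme. Suppose $h_p(X)>\omega_0\cdot k$ for some finite $k\ge 2$; then the analogous argument, now with the natural extensions of Lemma~\ref{p,2,1} and Theorem~\ref{alspach} to the ordinal $\omega_0\cdot k$, combined with the fact that $(X)_{p,2,(1)}$ remains stable and isomorphic to its square, produces $R_p^{\omega_0\cdot(k+1)}\hookrightarrow (X)_{p,2,(1)}$; the same pull-back step then forces $h_p(X)\ge\omega_0\cdot(k+1)$. Iterating over all $k<\omega_0$ gives $h_p(X)\ge\sup_k \omega_0\cdot k=\omega_0^2$, completing the proof.
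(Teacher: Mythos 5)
Your overall strategy --- ruling out the two ordinal gaps directly with the $(p,2,(1))$-sum machinery --- is genuinely different from the paper's, but it has an unfilled hole at exactly the point you yourself flag as ``the main obstacle,'' and that hole is not a technicality: it is where the whole argument lives. After obtaining $R_p^{\omega_0\cdot 2}\hookrightarrow (X)_{p,2,(1)}$ you must conclude something about $H^{\delta''}_{\omega_0+n}(X)$, i.e.\ about $X$ itself, and you give no argument for this pull-back beyond the hope that the ``structured form'' of the transplanted witness helps. The obstruction is real: for $X=\ell_p$ one has $(X)_{p,2,(1)}\sim R_p^{\omega_0}$ (this is shown inside the proof of Theorem~\ref{Xp}), so $h_p((X)_{p,2,(1)})=\omega_0\cdot 2$ while $h_p(X)=\omega_0$; witnesses at level $\omega_0+n$ in $(X)_{p,2,(1)}$, even ones transplanted from $R_p^{\omega_0+n}$, simply do not descend to $X$. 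Since nothing in your setup distinguishes a hypothetical $X$ with $h_p(X)=\omega_0+n$ from this example at the moment of the pull-back, the step as described cannot be completed. The second gap has the same problem plus another: the ``natural extensions'' of Lemma~\ref{p,2,1} and Theorem~\ref{alspach} to the ordinals $\omega_0\cdot k$ are asserted, not proved, and Theorem~\ref{alspach} is specifically a statement about $\omega_0\cdot 2$.

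The paper avoids all of this by changing the dichotomy: instead of splitting on the value of the index, it splits on the structure of $X$ using the Haydon--Odell--Schlumprecht theorem (either $X\hookrightarrow\ell_p\oplus\ell_2$ or $\ell_p(\ell_2)\hookrightarrow X$). In the first case $h_p(X)\le\omega_0\cdot 2$ because $\ell_p\oplus\ell_2$ and $R_p^{\omega_0}$ embed in each other, and the subcase analysis ($X\hookrightarrow\ell_p$, $X\sim\ell_2$, or $\ell_p\oplus\ell_2\hookrightarrow X$) pins the index at $\omega_0$ or $\omega_0\cdot2$ via Propositions~\ref{ellp1} and~\ref{ell2}. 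In the second case $R_p^{\omega_0\cdot n}\hookrightarrow\ell_p(\ell_2)\hookrightarrow X$ for every $n$ (via Schechtman's tensor powers of $X_p$ and Alspach's embedding), and the lower bound $h_p(R_p^{\alpha})\ge\alpha+1$ from \cite{BRS} gives $h_p(X)\ge\omega_0^2$. You would need an ingredient of comparable strength to replace that dichotomy; the $(p,2,(1))$-sum calculus alone does not supply it.
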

\begin{proof}
We recall from \cite{HOS} that if $X$ is a subspace of $L_p$ then
either $X\hookrightarrow \ell_p\oplus \ell_2$ or
$\ell_p(\ell_2)\hookrightarrow X$.
Suppose $X\hookrightarrow \ell_p\oplus \ell_2$. Note that $R_p^{\omega_0}$
and $\ell_p\oplus\ell_2$ embeds in each other (see \cite{Ro}).
Thus we have from Theorem ~\ref{index} and Theorem ~\ref{subspace}
that $h_p(X)\leq \omega_0\cdot 2$. If $X\hookrightarrow \ell_p$ or $X\sim \ell_2$
then by Proposition ~\ref{ellp1} and
Proposition ~\ref{ell2} we have $h_p(X)=\omega_0$. In otherwise
$\ell_p\oplus\ell_2\hookrightarrow X$ hence $h_p(X)=\omega_0\cdot2$.\\
In the case when $\ell_p(\ell_2) \hookrightarrow X $, we actually have
$R_p^{\omega_0\cdot n}\hookrightarrow X $ for each $n\in\N$. This is because
$R_p^{\omega_0\cdot n}\hookrightarrow\ell_p(\ell_2)$.
(This fact is highly non trivial.
In \cite{S} Schechtman constructed countably many mutually
non isomorphic subspaces of $\ell_p(\ell_2)$, by taking repeated tensor
product of $X_p$, denoted by $\otimes_1^n X_p$. These spaces are
$\mathcal{L}_p$ spaces. Alspach proved in \cite{DA} that
$R_p^{\omega_0\cdot n}\hookrightarrow \otimes_1^n X_p$.)

It was proved in \cite{BRS} that for any ordinal
$\al<\omega_1$, $h_p(R_p^{\al})\geq \al+1$.
Hence we have $h_p(X)\geq\omega_0\cdot n +1$ for all $n\in\N$.
This shows that $h_p(X)\geq \omega_0^2$.

\end{proof}

We now prove Theorem ~\ref{index}.\\
\textbf{Proof of Theorem~\ref{index}}: It is known that
(see \cite [Corollary 2.10]{DA}) $R_p^{\al}$ spaces are
isomorphically distinct at limit ordinals. Thus
we have $R_p^{\omega_0+k}\sim R_p^{\omega_0}$ for all $k\in \N$ and
using Theorem ~\ref{h1} we have
$h_p(R_p^{\omega_0})\geq \omega_0\cdot 2$. We will show that
$h_p(R_p^{\omega_0})=\omega_0\cdot 2$. Suppose on the contrary
$h_p(R_p^{\omega_0})>\omega_0\cdot 2$. Hence there exists a
$0<\de\leq 1$ such that $h_p(\de, R_{p}^{\omega_0})>\omega_0\cdot
2$. Thus for each $n\in\N$,
$H_{\omega_0+n}^{\de}(R_{p}^{\omega_0})\not = \es$. By using
Lemma~\ref{p,2,1} there exists some constant $C$ such that for all
$n\in\N$  we have $R_{p,0}^{\omega_0+n}\stackrel{C}\hookrightarrow
(R_{p}^{\omega_0})_{p,2,(1)}$. By Taking (p,2,(1)) sum we have
$(\sum R_{p,0}^{\omega_0+n})_{p,2,(1)}\stackrel{C}\hookrightarrow
((R_{p}^{\omega_0})_{p,2,(1)})_{p,2,(1)}$. Now by using
Theorem~\ref{alspach} and Remark~\ref{stable}
we have  $R_{p}^{\omega_0\cdot
2}\stackrel{A}\sim (\sum R_{p,0}^{\omega_0+n})_{p,2,(1)}$ and
$(R_{p}^{\omega_0})_{p,2,(1)}\stackrel{B}\sim R_{p}^{\omega_0} $ for
some constant $B$. From this we conclude that
$R_{p}^{\omega_0\cdot2}\stackrel {CAB^2}\hookrightarrow
R_{p}^{\omega_0}$, which contradicts Remark~\ref{r1}. Thus we have
$h_p(R_p^{\omega_0})=\omega_0\cdot 2$.

\begin{cor} \label{minimum}
Let $X$ be an infinite dimensional subspace of $L_p$ $(2<p<\iy)$
such that $X\not\sim\ell_2$. Then $h_p(X)=\omega_0$ if and only if
$X\hookrightarrow \ell_p$.
\end{cor}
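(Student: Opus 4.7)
The plan is to read the corollary straight off Theorem~\ref{choice}, Proposition~\ref{ellp1}, and Remark~\ref{infinite}; no new machinery appears to be needed.

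For the easy direction ($\Leftarrow$), I would assume $X\hookrightarrow\ell_p$. Theorem~\ref{subspace} together with Proposition~\ref{ellp1} immediately gives $h_p(X)\leq h_p(\ell_p)=\omega_0$, while Remark~\ref{infinite} provides the reverse inequality $h_p(X)\geq\omega_0$ for any infinite dimensional $X$. Equality follows.

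For the forward direction ($\Rightarrow$), the cleanest route is to revisit the case analysis inside the proof of Theorem~\ref{choice}. Recall the dichotomy from \cite{HOS} recalled there: either $X\hookrightarrow\ell_p\oplus\ell_2$ or $\ell_p(\ell_2)\hookrightarrow X$. In the second case Theorem~\ref{choice} forces $h_p(X)\geq\omega_0^2$, which contradicts $h_p(X)=\omega_0$. So we must have $X\hookrightarrow\ell_p\oplus\ell_2$. Next, by Kadec--Pelczynski and \cite{WO} (recalled in the introduction), if $X$ were neither isomorphic to $\ell_2$ nor a subspace of $\ell_p$, then $\ell_p\oplus\ell_2$ would embed in $X$; invoking Theorem~\ref{index} (via $\ell_p\oplus\ell_2\sim_{(c)} R_p^{\omega_0}$ modulo isomorphism in both directions, per Remark~\ref{r1} and \cite{Ro}) this would yield $h_p(X)\geq\omega_0\cdot 2$, again contradicting $h_p(X)=\omega_0$. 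Hence, under the hypothesis $X\not\sim\ell_2$, the only remaining possibility is $X\hookrightarrow\ell_p$.

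There is no real obstacle: the entire argument is a short case-chase once Theorem~\ref{choice} and the propositions preceding it are in hand. The one point requiring care is invoking the classical embedding $\ell_p\oplus\ell_2\hookrightarrow X$ whenever $X\subseteq L_p$ is not isomorphic to $\ell_2$ and does not embed in $\ell_p$; this is exactly the consequence of Kadec--Pelczynski \cite{KP} together with \cite{WO} summarised in the introductory paragraph, so it can be cited rather than reproved.
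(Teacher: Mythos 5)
Your argument is correct and is essentially the paper's own proof: the backward direction via Theorem~\ref{subspace}, Proposition~\ref{ellp1} and Remark~\ref{infinite}, and the forward direction via the classical fact that $X\not\sim\ell_2$ and $X\not\hookrightarrow\ell_p$ force $\ell_p\oplus\ell_2\hookrightarrow X$, whence $h_p(X)\geq\omega_0\cdot 2$ by the mutual embedding of $\ell_p\oplus\ell_2$ and $R_p^{\omega_0}$ together with Theorems~\ref{index} and~\ref{subspace}. The preliminary case split through the dichotomy of \cite{HOS} is harmless but unnecessary, since the Kadec--Pelczynski/Johnson--Odell consequence already covers all subspaces of $L_p$.
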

\begin{proof}

If $X\hookrightarrow\ell_p$ then by Proposition~\ref{ellp1},
Theorem~\ref{subspace} and Remark~\ref{infinite} we have
$h_p(X)=\omega_0$. Conversely if $X\not\sim\ell_2$ and
$X\not\hookrightarrow\ell_p$ then $\ell_p\oplus \ell_2
\hookrightarrow X$. Since $\ell_p\oplus \ell_2$ and $R_p^{\omega_0}$
embeds in each other
 thus by Theorem~\ref{subspace} and Theorem~\ref{index} we have
$h_p(X)\geq\omega_0 \cdot 2$.
\end{proof}

To end this note we will provide a sufficient condition (which is
trivially necessary) for a $\mathcal{L}_p$ subspace of
$\ell_p\oplus\ell_2$ to be isomorphic to $X_p$.

\begin{thm}\label{Xp}
Let $X$ be a $\mathcal{L}_p$ subspace of
$\ell_p\oplus\ell_2$, $ 2 < p < \infty$ such that
$X\not\hookrightarrow \ell_p$. Then $X\sim X_p$ if
and only if $X$ is stable under taking $(p,2,(1))$
sum.
\end{thm}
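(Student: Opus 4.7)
\noindent\emph{Proof plan.} The ``only if'' direction is immediate from Remark~\ref{stable}, which gives that $X_p\sim R_p^{\omega_0}$ is $(p,2,(1))$-stable. For the converse I plan to apply a Pelczynski-type decomposition argument.

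First I would establish that $X$ and $X_p$ are mutually complemented in one another. Since $X\subseteq \ell_p\oplus\ell_2\subseteq L_p$ and $X$ is $\mathcal{L}_p$, restricting the $L_p$-projection onto $X$ to $\ell_p\oplus\ell_2$ yields $X\stackrel{(c)}\hookrightarrow \ell_p\oplus\ell_2$. Since $X_p$ is $\mathcal{L}_p$ and not isomorphic to $\ell_p$, by the classical result recalled in the introduction it contains a complemented copy of $\ell_p\oplus\ell_2$; composing yields $X\stackrel{(c)}\hookrightarrow X_p$. The reverse embedding $X_p\stackrel{(c)}\hookrightarrow X$ is entirely symmetric, using that $X_p$ is $\mathcal{L}_p$ and embeds in $\ell_p\oplus\ell_2$, while $X$ (being $\mathcal{L}_p$ with $X\not\hookrightarrow\ell_p$) contains a complemented $\ell_p\oplus\ell_2$.

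Writing $X\sim X_p\oplus B$ and $X_p\sim X\oplus A$ for the resulting complemented decompositions, I would next lift to infinite sums. The $(p,2,(1))$-sum distributes, up to universal constants, over direct sums when the summands are realized on independent $L_p$-coordinates, so using the $(p,2,(1))$-stability of both $X$ (by hypothesis) and $X_p$ (Remark~\ref{stable}),
\[
X\sim (X)_{p,2,(1)}\sim (X_p\oplus B)_{p,2,(1)}\sim (X_p)_{p,2,(1)}\oplus (B)_{p,2,(1)}\sim X_p\oplus B_\infty,
\]
where $B_\infty:=(\sum_n B)_{p,2,(1)}$, and analogously $X_p\sim X\oplus A_\infty$.

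The final Pelczynski substitution is then immediate: plugging $X\sim X_p\oplus B_\infty$ into $X_p\sim X\oplus A_\infty$ yields $X_p\sim X_p\oplus A_\infty\oplus B_\infty$; plugging this back into $X\sim X_p\oplus B_\infty$ gives $X\sim X_p\oplus A_\infty\oplus B_\infty\oplus B_\infty$. Since $B_\infty$ is itself a $(p,2,(1))$-sum indexed by $\N$, partitioning $\N$ into two infinite blocks gives $B_\infty\oplus B_\infty\sim B_\infty$, and hence $X\sim X_p\oplus A_\infty\oplus B_\infty\sim X_p$. The main technical point I anticipate is the bookkeeping in the lifting step, ensuring that the abstract decompositions $X\sim X_p\oplus B$ and $X_p\sim X\oplus A$ can be realized concretely inside $L_p$ on independent product-measure coordinates so that the $(p,2,(1))$-construction splits cleanly across the direct sums; this should be arrangeable via the product-measure formalism already used in the definitions.
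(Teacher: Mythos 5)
Your overall skeleton (mutual complementation plus a Pe\l czynski accordion driven by $(p,2,(1))$-stability) is the same as the paper's, but the way you obtain the two complemented embeddings contains a genuine error. Both halves of your ``mutual complementation'' step rest on the claim that a $\mathcal{L}_p$ subspace of $\ell_p\oplus\ell_2$ is complemented \emph{in} $\ell_p\oplus\ell_2$: you use it once for the given copy of $X$ and once, ``symmetrically'', for a copy of $X_p$ inside $\ell_p\oplus\ell_2$. This is false. By the Edelstein--Wojtaszczyk classification, every infinite-dimensional complemented subspace of $\ell_p\oplus\ell_2$ is isomorphic to $\ell_p$, $\ell_2$ or $\ell_p\oplus\ell_2$; in particular $X_p$ is \emph{not} complemented in $\ell_p\oplus\ell_2$, and neither is any $X$ for which the theorem is non-vacuous. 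The underlying slip is the belief that every embedded copy of a $\mathcal{L}_p$ space in $L_p$ admits a projection from $L_p$; the $\mathcal{L}_p$ property only guarantees that \emph{some} isomorph is complemented, not the one you are handed. This is precisely why the paper cannot take your shortcut: it obtains $X\stackrel{(c)}\hookrightarrow X_p$ from the non-trivial \cite[Proposition 8.6]{HOS}, and it manufactures $X_p\stackrel{(c)}\hookrightarrow X$ out of the stability hypothesis, by taking the $(p,2,(1))$-sum of the complemented copy of $\ell_p\oplus\ell_2$ sitting inside $X$ and proving the claim $(\ell_p\oplus\ell_2)_{p,2,(1)}\sim R_p^{\omega_0}\sim X_p$. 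Note that in the paper's proof the stability of $X$ is used to \emph{produce} the complemented copy of $X_p$, whereas in yours it is only used at the very end; that asymmetry is a warning sign.

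The lifting step is also shakier than ``bookkeeping''. The identity $(X_p\oplus B)_{p,2,(1)}\sim (X_p)_{p,2,(1)}\oplus (B)_{p,2,(1)}$ is problematic for two reasons: the $(p,2,(1))$-sum is not an isomorphic invariant of the summand (it sees the $L_2$-norms of the functions, which an $L_p$-isomorphism does not control, so you cannot freely replace $X$ by an abstract realization of $X_p\oplus B$); and the projection onto a summand, being bounded only in the $L_p$-norm, need not control the $\ell_2(L_2)$ part of the $(p,2,(1))$-norm, so the coordinatewise projection need not be bounded on the sum. The paper avoids distributing the sum over an abstract decomposition altogether: it only invokes \cite[Lemma 2.5]{DA}, which transfers complementation of concretely given subspaces of $L_p$ through the $(p,2,(1))$-sum, and that weaker statement (together with splitting off a single coordinate of the sum) suffices to run the accordion. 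If you repair the complementation step as the paper does, you should also reroute the lifting step through that lemma rather than through distributivity.
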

\begin{proof}
If $X$ is a $\mathcal{L}_p$ subspace of $\ell_p\oplus\ell_2$ such
that $X\not\hookrightarrow \ell_p$ then
$\ell_p\oplus\ell_2\stackrel{(c)}\hookrightarrow X$. By taking
$(p,2,(1))$ sum on both sides we get
$(\ell_p\oplus\ell_2)_{p,2,(1)}\hookrightarrow (X)_{p,2,(1)}$ and by
\cite[Lemma 2.5]{DA} this copy is complemented. Thus
$(\ell_p\oplus\ell_2)_{p,2,(1)}\stackrel{(c)}\hookrightarrow
(X)_{p,2,(1)}$. We claim that $(\ell_p\oplus\ell_2)_{p,2,(1)}\sim
R_p^{\omega_0}$. To see this first note that
$\ell_p^{2^n}\stackrel{(c)}\hookrightarrow \ell_p$ and for each $n$
the projection constant is 1. Thus we have $(\sum
\ell_p^{2^n})_{p,2,(1)}\stackrel{(c)}\hookrightarrow
(\ell_p)_{p,2,(1)}$. Since $R_p^{\omega_0} \sim (\sum
\ell_p^{2^n})_{p,2,(1)}$ there exists an isomorphic copy of
$R_p^{\omega_0}$ complemented in $(\ell_p)_{p,2,(1)}$. Observe that
this copy is stable under taking $(p,2,(1))$ sum.
 Since $\ell_p$ is complemented in this copy we have
$(\ell_p)_{p,2,(1)}$ is complemented there. As both concerned spaces
are isomorphic to their square  by decomposition method we have
$(\ell_p)_{p,2,(1)}\sim R_p^{\omega_0}$. Now $(\ell_p\oplus
\ell_2)_{p,2,(1)}\sim (\ell_p)_{p,2,(1)}\oplus
(\ell_2)_{p,2,(1)}\sim R_p^{\omega_0}\oplus \ell_2\sim
R_p^{\omega_0}$. Coming back to the proof, by the claim we have $X$
contains a complemented copy of $R_p^{\omega_0}$. Since $X$ is a
$\mathcal{L}_p$ subspace of $\ell_p\oplus\ell_2$ so by \cite
[Proposition 8.6]{HOS} we have $X\stackrel{(c)}\hookrightarrow X_p$.
Recall that $R_p^{\omega_0}\sim X_p$. Since $X$ is stable under
taking $(p,2,(1))$ sum it is isomorphic to its square. Using
decomposition method we have $X \sim X_p$.
\end{proof}

\begin{rem}
It follows a complemented subspace $X$ of $X_p$ is stable under
taking $(p,2,(1))$ sum if and only if $X$ satisfies condition $(2)$
of \cite[Theorem 2.1]{DA1}.
\end{rem}

\end{document}